\newtheorem{theorem}{Theorem}
\newtheorem{corollary}[theorem]{Corollary}
\newtheorem{proposition}[theorem]{Proposition}
\theoremstyle{definition}
\newtheorem{remark}[theorem]{Remark}
\renewenvironment{proof}[1][Proof]{\noindent\textbf{#1.} }{\hfill$\Box$}
\def\tprod{\mathop{\prod}}
\def\zx{\begin{equation}\label}
\def\zc{\end{equation}}
\def\aru#1{\left\{{\begin{array}{l}#1\end{array}}\right.}
\def\mm{\medskip\\}
\def\dsum{\displaystyle\sum}
\def\dfrac{\displaystyle\frac}
\def\vcov#1{\!\!\!\left.\phantom{\int}\right|{}^{(1)}_{(#1)}}
\def\svcov#1{\!\!\!\left.\phantom{\sum}\right|{}^{(1)}_{(#1)}}
\numberwithin{theorem}{section}
\numberwithin{equation}{section}
\begin{document}
\title{The $1$-jet Generalized Lagrange Geometry induced by the rheonomic Chernov metric}
\author{Vladimir Balan and Mircea Neagu}
\date{}
\maketitle
%===========================================================================================
\begin{abstract}
{The aim of this paper is to develop on the $1$-jet space $J^{1}(\mathbb{R},M^{4})$ the jet Generalized Lagrange Geometry (cf. \cite{Ner,Nec}) for the rheonomic Chernov metric}
    $$F_{[3]}(t,y)=\sqrt{h^{11}(t)}\cdot \sqrt[3]{y_{1}^{1}y_{1}^{2}y_{1}^{3}+
        y_{1}^{1}y_{1}^{2}y_{1}^{4}+y_{1}^{1}y_{1}^{3}y_{1}^{4}+y_{1}^{2}y_{1}^{3}y_{1}^{4}}.$$
{The associated gravitational and electromagnetic field models based on the rheonomic Finsler Chernov metric tensor are developed and discussed.}
\end{abstract}

\textbf{Mathematics Subject Classification (2000):} 53C60, 53C80, 83C22.

\textbf{Key words and phrases:} rheonomic Chernov metric of order three,\linebreak
canonical nonlinear connection, Cartan canonical connection, $d-$torsions and\linebreak
$d-$\-curvatures, geometrical Einstein equations.

\section{Introduction}

\hspace{5mm}It is obvious that our natural physical intuition distinguishes
four dimensions in a natural correspondence with {material environment}.
Consequently, {four-dimensionality plays a special} role in almost all
modern physical theories.

On the other hand, it is an well known fact that, in order to create the
Relativity Theory, Einstein was forced to use the Riemannian geometry
instead of the classical Euclidean geometry, the Riemannian geometry
representing the natural ma\-the\-ma\-ti\-cal model for the local
\textit{isotropic} space-time. {But recent studies of physicists suggest
a \textit{non-isotropic} perspective of Space-Time - e.g.,  the concept of inertial body mass
emphasizes the necessity of study of locally non-isotropic spaces
(\cite{GP}).}
{Among the possible models for the study of non-isotropic physical phenomena,
Finsler geometry is an appropriate and effective ma\-the\-ma\-ti\-cal framework.}

The {works of Russian scholars (\cite{As1,Mikhailov,GP})} emphasize the
importance of the Finsler geometry which is characterized by the {complete
equivalence} of all non-isotropic directions and {promote in their works
models based on special locally-Minkowski types of $m-$root metrics - e.g.,
Berwald-Mo\'{o}r or Chernov. Since any of the directions can be related to proper time,
such spaces were generically called as having \textit{"multi-dimensional time"} (\cite{Pa2}).
In the framework of the $3$- and $4$-dimensional linear space with Berwald-Mo\'{o}r
metric (i.e., having three- and four-dimensional time), Pavlov and his
co-workers (\cite{GP,Pa1,Pa2}) provide new physical model-supporting evidence
and geometrical interpretations,} such as:
\begin{itemize}\setlength{\parskip}{-1mm}
\item physical events = points in the multi-dimensional time;
\item straight lines = shortest curves;
\item intervals = distances between the points along of a straight line;
\item light pyramids $\Leftrightarrow $ light cones in a pseudo-Euclidian space;
\item {surfaces of simultaneity} = the surfaces of simultaneous physical events.
\end{itemize}
An important model of $m-$root type - the Chernov metric (\cite{Che,Ba}),
   \zx{ch}F:TM\rightarrow \mathbb{R},\mathbb{\qquad }F(y)=
    \sqrt[3]{y^{1}y^{2}y^{3}+y^{1}y^{2}y^{4}+y^{1}y^{3}y^{4}+y^{2}y^{3}y^{4}},\zc
was recently shown to be relevant for Relativity. The larger class of Finsler metrics to which this metric belongs, the $m-$root metrics, have been previously studied by the Japanese geometers Matsumoto and
Shimada (\cite{Matsumoto,Mats-Shimada,Shimada}).

Considering the former geometrical and physical reasons, the present paper is devoted to the development on the $1$-jet space $J^{1}(\mathbb{R},M^{4})$ of the Finsler-like geometry, applied to geometric gravitational and electromagnetic field theory associated to the natural 1-time rheonomic jet extension of the Chernov \linebreak
metric \eqref{ch}
   \zx{chr}F_{[3]}(t,y)=\sqrt{h^{11}(t)}\cdot \sqrt[3]{y_{1}^{1}y_{1}^{2}y_{1}^{3}+
    y_{1}^{1}y_{1}^{2}y_{1}^{4}+y_{1}^{1}y_{1}^{3}y_{1}^{4}+y_{1}^{2}y_{1}^{3}y_{1}^{4}},\zc
where $h_{11}(t)$ is a Riemannian metric on $\mathbb{R}$ and $(t,x^{1},x^{2},x^{3},x^{4},
y_{1}^{1},y_{1}^{2},y_{1}^{3},y_{1}^{4})$ are the coordinates of the $1$-jet space $J^{1}(\mathbb{R},M^{4})$.

{The geometry that models gravitational and electromagnetic theories, relying on distinguished ($d-$)connections (and their $d-$torsions and $d-$curvatures), produced by a jet rheonomic Lagrangian function $L:J^{1}(\mathbb{R}%
,M^{n})\rightarrow \mathbb{R}$, was extensively described in \cite{Ner}, where the geometrical ideas are similar,
but exhibiting distinct features compared to the ones developed by Miron and Anastasiei in classical Generalized Lagrange Geometry (\cite{MA}). The geometrical jet distinguished framework from \cite{Ner} - generically called as jet geometrical theory of the \textit{rheonomic Lagrange spaces}, was initially stated by Asanov in \cite{As2} and developed further in the book \cite{Nec}.}

In the sequel, we apply the general geometrical results from \cite{Ner} to the rheonomic Chernov metric $F_{[3]}$.

%=================================================================================================================
\section{Preliminary notations and formulas}

\hspace{5mm}Let $(\mathbb{R},h_{11}(t))$ be a Riemannian manifold, where $\mathbb{R}$ is the set of real numbers. The Christoffel symbol of the Riemannian metric $h_{11}(t)$ is
   \zx{na}\varkappa _{11}^{1}=\frac{h^{11}}{2}\frac{dh_{11}}{dt},\quad\mbox{ {where} }\quad
    h^{11}=\frac{1}{h_{11}}>0.\zc
Let also $M^{4}$ be a manifold of dimension four, whose local coordinates
are $(x^{1},x^{2},x^{3},x^{4})$. Let us consider the $1$-jet space $J^{1}(%
\mathbb{R},M^{4})$, whose local coordinates are%
   $$(t,x^{1},x^{2},x^{3},x^{4},y_{1}^{1},y_{1}^{2},y_{1}^{3},y_{1}^{4}).$$
These transform by the rules (the Einstein convention of summation is used
throughout this work):
   \zx{tr-rules}\widetilde{t}=\widetilde{t}(t),\quad \widetilde{x}^{p}=\widetilde{x}^{p}(x^{q}),\quad
   \widetilde{y}_{1}^{p}=\dfrac{\partial \widetilde{x}^{p}}{\partial x^{q}}\dfrac{dt}{d\widetilde{t}}\cdot
   y_{1}^{q},\qquad p,q=\overline{1,4},\zc
where $d\widetilde{t}/dt\neq 0$ and rank $(\partial \widetilde{x}^{p}/\partial x^{q})=4$. \par\smallskip
{We further consider that the manifold $M^{4}$ is endowed with a tensor of kind $(0,3)$, given by the local components $S_{pqr}(x)$, which is totally symmetric in the indices $p$, $q$ and $r$. We shall use the notations}
   \zx{nota}S_{ij1}=6S_{ijp}y_{1}^{p},\quad S_{i11}=3S_{ipq}(x)y_{1}^{p}y_{1}^{q},\quad
    S_{111}=S_{pqr}y_{1}^{p}y_{1}^{q}y_{1}^{r}\zc
{We assume that the $d-$tensor $S_{ij1}$ is non-degenerate, i.e.,} there exists the $d-$tensor $S^{jk1}$ on $J^{1}(\mathbb{R},M^{4})$, such that $S_{ij1}S^{jk1}=\delta _{i}^{k}.$ In this context, we can consider the \textit{third-root Finsler-like function} {(\cite{Shimada}, \cite{AN}), which is $1$-positive homogenous in the variable $y$},
   \zx{F}F(t,x,y)=\sqrt[3]{S_{pqr}(x)y_{1}^{p}y_{1}^{q}y_{1}^{r}}\cdot \sqrt{h^{11}(t)}=
    \sqrt[3]{S_{111}(x,y)}\cdot \sqrt{h^{11}(t)},\zc
where the Finsler function $F$ has as domain of definition all values $(t,x,y)$ which {satisfy} the condition $S_{111}(x,y)\neq 0.$ {Then the} $3$-positive homogeneity of the "$y$-function" $S_{111}$ ({which is a} $d-$tensor on the $1$-jet space $J^{1}(\mathbb{R},M^{4})$), leads to the equalities:%
   $$S_{i11}=\frac{\partial S_{111}}{\partial y_{1}^{i}},\quad S_{i11}y_{1}^{i}=3S_{111},\quad
        S_{ij1}y_{1}^{j}=2S_{i11},\quad S_{ij1}=\frac{\partial S_{i11}}{\partial y_{1}^{j}}=
       \frac{\partial ^{2}S_{111}}{\partial y_{1}^{i}\partial y_{1}^{j}},$$
   $$S_{ij1}y_{1}^{i}y_{1}^{j}=6S_{111},\quad \frac{\partial S_{ij1}}{\partial y_{1}^{k}}=6S_{ijk},\quad
        S_{ijp}y_{1}^{p}=\frac{1}{6}S_{ij1}.$$
The \textit{fundamental metrical $d-$tensor} produced by $F$ is given by the
formula%
   $$g_{ij}(t,x,y)=\frac{h_{11}(t)}{2}\frac{\partial ^{2}F^{2}}{\partial y_{1}^{i}\partial y_{1}^{j}}.$$
By direct computations, the fundamental metrical $d-$tensor takes the form%
\begin{equation}
g_{ij}(x,y)=\frac{S_{111}^{-1/3}}{3}\left[ S_{ij1}-\frac{1}{3S_{111}}%
S_{i11}S_{j11}\right] .  \label{g-(ij)-general}
\end{equation}
{Moreover, since the $d-$tensor $S_{ij1}$ is non-degenerate, the matrix $g=(g_{ij})$ admits an inverse $g^{-1}=(g^{jk})$, whose entries are}
\begin{equation}g^{jk}=3S_{111}^{1/3}\left[ S^{jk1}+\frac{S_{1}^{j}S_{1}^{k}}{3\left(
S_{111}-\mathbf{S}_{111}\right) }\right] ,  \label{g+(jk)-general}
\end{equation}
where $S_{1}^{j}=S^{jp1}S_{p11}$ and $3\mathbf{S}_{111}=S^{pq1}S_{p11}S_{q11}.$\par\smallskip

Following {the ideas from \cite{Ner}}, the \textit{energy action functional}
   $$\mathbb{E}(t,x(t))=\int_{a}^{b}F^{2}(t,x(t),y(t))\sqrt{h_{11}(t)}
    dt=\int_{a}^{b}S_{111}^{2/3}\cdot h^{11}\sqrt{h_{11}}dt,$$
where $y(t)=dx/dt$, produces on the $1$-jet space $J^{1}(\mathbb{R},M^{4})$,
via the Euler-Lagrange equations, the \textit{canonical time dependent spray}
   \zx{sp}\mathcal{S}=\left( H_{(1)1}^{(i)},\mbox{ }G_{(1)1}^{(i)}\right) ,\zc
where, {using the notations \eqref{na} and \eqref{nota}, we have}
   $$H_{(1)1}^{(i)}=-\frac{\varkappa _{11}^{1}}{2}y_{1}^{i}$$
and
{
\begin{eqnarray}
G_{(1)1}^{(i)} &=&\frac{g^{im}}{6\sqrt[3]{S_{111}}}\left[ \frac{\partial
S_{m11}}{\partial x^{p}}y_{1}^{p}-\left( 1-\varkappa _{11}^{1}\right) \frac{%
\partial S_{111}}{\partial x^{m}}\right] -  \label{spray-spatial-general} \\
&&-\frac{S_{1}^{i}}{6\left( S_{111}-\mathbf{S}_{111}\right) }\left( \frac{\partial
S_{111}}{\partial x^{p}}y_{1}^{p}+3\varkappa _{11}^{1}S_{111}\right).  \notag
\end{eqnarray}
}
\begin{remark}
In the particular case when the components $G_{pqr}$ are independent on the
variable $x$, the expression of (\ref{spray-spatial-general}) simplifies as%
\begin{equation}
G_{(1)1}^{(i)}=-\varkappa _{11}^{1}\frac{S_{111}}{2\left( S_{111}-\mathbf{S}%
_{111}\right) }S_{1}^{i}.  \label{spray-spatial-Minkowski}
\end{equation}%
Note that, in this case, the Finsler-like function (\ref{F}) {is of locally-Minkowski type}.
\end{remark}

It is known (\cite{Ner}) that the canonical time dependent spray $\mathcal{S}$ given by \eqref{sp} determines on the $1$-jet space $J^{1}(\mathbb{R},M^{4})$ a \textit{canonical nonlinear connection} given by
   \zx{nlc-general}\Gamma =\left( M_{(1)1}^{(i)}=2H_{(1)1}^{(i)}=-\varkappa _{11}^{1}y_{1}^{i},%
    \mbox{ }N_{(1)j}^{(i)}=\frac{G_{(1)1}^{(i)}}{\partial y_{1}^{j}}\right) .\zc
%===========================================================================================================
\section{The rheonomic Chernov metric}

\hspace{5mm}Beginning with this Section we will focus only on the \textit{%
rheonomic Chernov metric}, which is the Finsler-like metric (\ref{F}) for
the particular case%
\begin{equation*}
S_{pqr}:=S_{[3]pqr}=\left\{
\begin{array}{ll}
\dfrac{1}{3!}, & \{p,q,r\}\mbox{ - distinct indices}\medskip \\
0, & \mbox{otherwise.}%
\end{array}%
\right.
\end{equation*}%
Consequently, the rheonomic Chernov metric is given by%
\begin{equation}
F_{[3]}(t,y)=\sqrt{h^{11}(t)}\cdot \sqrt[3]{%
y_{1}^{1}y_{1}^{2}y_{1}^{3}+y_{1}^{1}y_{1}^{2}y_{1}^{4}+y_{1}^{1}y_{1}^{3}y_{1}^{4}+y_{1}^{2}y_{1}^{3}y_{1}^{4}%
}.  \label{rheon-B-M}
\end{equation}%
Moreover, using {the} preceding notations and formulas, we obtain the following
relations:%
\begin{equation*}
S_{111}:=%
S_{[3]111}=y_{1}^{1}y_{1}^{2}y_{1}^{3}+y_{1}^{1}y_{1}^{2}y_{1}^{4}+y_{1}^{1}y_{1}^{3}y_{1}^{4}+y_{1}^{2}y_{1}^{3}y_{1}^{4},
\end{equation*}%
\begin{equation*}
S_{i11}:=S_{[3]i11}=\frac{\partial S_{[3]111}}{\partial
y_{1}^{i}}=\frac{S_{[3]111}y_{1}^{i}-S_{[4]1111}}{\left( y_{1}^{i}\right)
^{2}},
\end{equation*}%
\begin{equation*}
S_{ij1}:=S_{[3]ij1}=\frac{\partial S_{[3]i11}}{\partial
y_{1}^{j}}=\frac{\partial ^{2}S_{[3]111}}{\partial y_{1}^{i}\partial
y_{1}^{j}}=\left\{
\begin{array}{ll}
S_{[1]1}-y_{1}^{i}-y_{1}^{j}, & i\neq j\medskip \\
0, & i=j,%
\end{array}%
\right.
\end{equation*}%
where $S_{[4]1111}=y_{1}^{1}y_{1}^{2}y_{1}^{3}y_{1}^{4}$ and $%
S_{[1]1}=y_{1}^{1}+y_{1}^{2}+y_{1}^{3}+y_{1}^{4}.$ Note that, for $i\neq j$,
the following equality {holds true as well}:
\begin{equation*}
S_{[3]i11}\cdot S_{[3]j11}=S_{[3]111}\left(
S_{[1]1}-y_{1}^{i}-y_{1}^{j}\right) +\frac{S_{[4]1111}^{2}}{\left(
y_{1}^{i}\right) ^{2}\left( y_{1}^{j}\right) ^{2}}.
\end{equation*}

Because we have
   $$0\neq \det \left( S_{ij1}\right) _{i,j=\overline{1,4}}=4\left[4S_{[4]1111}-S_{[1]1}S_{[3]111}\right]
        :=\mathbf{D}_{1111},$$
we find
\begin{equation*}
S^{jk1}:=S_{[3]}^{jk1}=\left\{
\begin{array}{ll}
\dfrac{-2}{\mathbf{D}_{1111}}\left( y_{1}^{j}+y_{1}^{k}\right) \left[
y_{1}^{j}y_{1}^{k}+\dfrac{S_{[4]1111}}{y_{1}^{j}y_{1}^{k}}\right] , & j\neq
k\medskip \\
\dfrac{1}{\mathbf{D}_{1111}}\cdot \dfrac{1}{y_{1}^{j}}\left[
\tprod_{l=1}^{4}\left( y_{1}^{j}+y_{1}^{l}\right) \right] , & j=k.%
\end{array}%
\right.
\end{equation*}%
{Further, laborious computations lead to:}
\begin{equation}
\begin{array}{l}
S_{1}^{j}:=S_{[3]1}^{j}=S_{[3]}^{jp1}S_{[3]p11}=\dfrac{1}{2}%
y_{1}^{j},\medskip \\
\mathbf{S}_{111}:=\mathbf{S}%
_{[3]111}=S_{[3]}^{pq1}S_{[3]p11}S_{[3]q11}=\dfrac{1}{2}S_{[3]111}.%
\end{array}
\label{formule}
\end{equation}

Replacing now the above computed entities into the formulas (\ref%
{g-(ij)-general}) and (\ref{g+(jk)-general}), we get $g_{ij}:=%
g_{[3]ij}=$
\begin{equation}
=\left\{
\begin{array}{ll}
\dfrac{S_{[3]111}^{-1/3}}{9}\left[ 2\left(
S_{[1]1}-y_{1}^{i}-y_{1}^{j}\right) -\dfrac{S_{[4]1111}^{2}}{S_{[3]111}}%
\cdot \dfrac{1}{\left( y_{1}^{i}\right) ^{2}\left( y_{1}^{j}\right) ^{2}}%
\right] , & i\neq j\medskip \\
\dfrac{-S_{[3]111}^{-4/3}}{9}\cdot S_{[3]i11}^{2}, & i=j%
\end{array}%
\right.  \label{g-jos-(ij)}
\end{equation}

and%
\begin{equation}
g^{jk}:=g_{[3]}^{jk}=3S_{[3]111}^{1/3}\left[ S^{jk1}+\frac{1}{%
6S_{[3]111}}y_{1}^{j}y_{1}^{k}\right] \mbox{.}  \label{g-sus-(jk)}
\end{equation}

Consequently, using the formulas (\ref{spray-spatial-Minkowski}) and (\ref%
{formule}), we find the following geometrical result:

\begin{proposition}
For the rheonomic Chernov metric (\ref{rheon-B-M}), the \textit{energy
action functional}%
\begin{equation*}
\mathbb{E}_{[3]}(t,x(t))=\int_{a}^{b}\sqrt[3]{\left(
y_{1}^{1}y_{1}^{2}y_{1}^{3}+y_{1}^{1}y_{1}^{2}y_{1}^{4}+y_{1}^{1}y_{1}^{3}y_{1}^{4}+y_{1}^{2}y_{1}^{3}y_{1}^{4}\right) ^{2}%
}\cdot h^{11}\sqrt{h_{11}}dt
\end{equation*}%
produces on the $1$-jet space $J^{1}(\mathbb{R},M^{4})$ the \textit{%
canonical time dependent }spray
\begin{equation}
\mathcal{S}_{[3]}=\left( H_{(1)1}^{(i)}=-\frac{\varkappa _{11}^{1}}{2}%
y_{1}^{i},\mbox{ }G_{(1)1}^{(i)}=-\frac{\varkappa _{11}^{1}}{2}%
y_{1}^{i}\right) .  \label{spray-B-M}
\end{equation}
\end{proposition}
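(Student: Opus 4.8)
The plan is to obtain the two components of the canonical spray $\mathcal{S}_{[3]}$ directly from the general formulas established in Section~2. The temporal component $H_{(1)1}^{(i)}=-\tfrac{\varkappa_{11}^{1}}{2}y_{1}^{i}$ is already the universal expression valid for any third-root Finsler-like function (\ref{F}), so nothing has to be done there. For the spatial component $G_{(1)1}^{(i)}$ the key observation is that the Chernov $(0,3)$-tensor $S_{[3]pqr}$ has constant entries, hence is independent of $x$; consequently the hypothesis of the Remark following (\ref{spray-spatial-general}) is satisfied, and we may use the simplified locally-Minkowski formula (\ref{spray-spatial-Minkowski}), namely $G_{(1)1}^{(i)}=-\varkappa_{11}^{1}\dfrac{S_{111}}{2\left(S_{111}-\mathbf{S}_{111}\right)}S_{1}^{i}$, instead of the full expression (\ref{spray-spatial-general}).

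The remaining step is a substitution of the already-computed quantities (\ref{formule}). From $\mathbf{S}_{[3]111}=\tfrac12 S_{[3]111}$ one gets $S_{111}-\mathbf{S}_{111}=\tfrac12 S_{[3]111}$, so the scalar factor $\dfrac{S_{111}}{2\left(S_{111}-\mathbf{S}_{111}\right)}$ collapses to $1$ (this is well defined on the whole domain of $F_{[3]}$, where $S_{[3]111}\neq 0$). Combining this with $S_{[3]1}^{i}=\tfrac12 y_{1}^{i}$ yields $G_{(1)1}^{(i)}=-\varkappa_{11}^{1}\cdot 1\cdot\tfrac12 y_{1}^{i}=-\tfrac{\varkappa_{11}^{1}}{2}y_{1}^{i}$, which coincides with $H_{(1)1}^{(i)}$ and gives the claimed form of $\mathcal{S}_{[3]}$. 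Finally, the energy action functional displayed in the statement is simply $\mathbb{E}$ from Section~2 written out for $S_{111}=S_{[3]111}$, so no separate verification is needed there.

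The genuinely laborious part of this computation is not in the argument above but upstream: establishing the identities (\ref{formule}), i.e. $S_{[3]}^{jp1}S_{[3]p11}=\tfrac12 y_{1}^{j}$ and $S_{[3]}^{pq1}S_{[3]p11}S_{[3]q11}=\tfrac12 S_{[3]111}$. These require the explicit form of the inverse $S_{[3]}^{jk1}$ together with a bookkeeping of the $j=k$ versus $j\neq k$ cases and the telescoping products $\prod_{l}(y_{1}^{j}+y_{1}^{l})$; since they are already recorded in the excerpt, we take them as given and the proposition follows by the one-line substitution indicated. If one wished to be self-contained, the only delicate point would be checking that the denominators $\mathbf{D}_{1111}$, $S_{[3]111}$ and $S_{111}-\mathbf{S}_{111}$ are non-zero on the domain of definition, which is exactly the standing non-degeneracy assumption on $S_{ij1}$ together with $S_{111}\neq 0$.
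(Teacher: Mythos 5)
Your proposal is correct and follows exactly the paper's own route: the paper derives the proposition "using the formulas (\ref{spray-spatial-Minkowski}) and (\ref{formule})," i.e., the locally-Minkowski simplification justified by the $x$-independence of $S_{[3]pqr}$, followed by the substitution $\mathbf{S}_{111}=\tfrac12 S_{[3]111}$ and $S_{1}^{i}=\tfrac12 y_{1}^{i}$ that collapses the scalar factor to $1$. Your additional remarks on the non-vanishing of the denominators and on where the real labor lies (in establishing (\ref{formule})) are accurate and consistent with the paper.
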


Moreover, the formulas (\ref{nlc-general}) and (\ref{spray-B-M}) imply

\begin{corollary}
The canonical nonlinear connection on the $1$-jet space $J^{1}(\mathbb{R}%
,M^{4})$ produced by the rheonomic Chernov metric (\ref{rheon-B-M}) is%
\begin{equation}
\Gamma _{\lbrack 3]}=\left( M_{(1)1}^{(i)}=-\varkappa _{11}^{1}y_{1}^{i},%
\mbox{ }N_{(1)j}^{(i)}=-\frac{\varkappa _{11}^{1}}{2}\delta _{j}^{i}\right) ,
\label{nlc-B-M}
\end{equation}%
where $\delta _{j}^{i}$ is the Kronecker symbol.
\end{corollary}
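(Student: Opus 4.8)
The plan is to obtain $\Gamma_{[3]}$ simply by specializing the general construction recalled in \eqref{nlc-general} to the spray $\mathcal{S}_{[3]}$ that has just been computed in the preceding Proposition. According to \cite{Ner}, the canonical nonlinear connection attached to a time dependent spray $\mathcal{S}=\bigl(H_{(1)1}^{(i)},G_{(1)1}^{(i)}\bigr)$ has temporal component $M_{(1)1}^{(i)}=2H_{(1)1}^{(i)}$ and spatial component $N_{(1)j}^{(i)}=\partial G_{(1)1}^{(i)}/\partial y_{1}^{j}$. Hence the whole argument reduces to substituting the explicit expressions of $H_{(1)1}^{(i)}$ and $G_{(1)1}^{(i)}$ furnished by \eqref{spray-B-M} into these two defining formulas and reading off the result.

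First I would handle the temporal part. Since \eqref{spray-B-M} gives $H_{(1)1}^{(i)}=-\dfrac{\varkappa_{11}^{1}}{2}\,y_{1}^{i}$, the relation $M_{(1)1}^{(i)}=2H_{(1)1}^{(i)}$ yields immediately $M_{(1)1}^{(i)}=-\varkappa_{11}^{1}y_{1}^{i}$, which is exactly the first entry of \eqref{nlc-B-M}. Next I would compute the spatial part by differentiating $G_{(1)1}^{(i)}=-\dfrac{\varkappa_{11}^{1}}{2}\,y_{1}^{i}$ with respect to $y_{1}^{j}$. The only point requiring attention is that, by its very definition \eqref{na}, the Christoffel symbol $\varkappa_{11}^{1}=\dfrac{h^{11}}{2}\,\dfrac{dh_{11}}{dt}$ depends on $t$ alone and is therefore independent of the fibre coordinates $y_{1}^{k}$; consequently $\partial\varkappa_{11}^{1}/\partial y_{1}^{j}=0$ and
$$N_{(1)j}^{(i)}=\frac{\partial G_{(1)1}^{(i)}}{\partial y_{1}^{j}}=-\frac{\varkappa_{11}^{1}}{2}\,\frac{\partial y_{1}^{i}}{\partial y_{1}^{j}}=-\frac{\varkappa_{11}^{1}}{2}\,\delta_{j}^{i},$$
which is the second entry of \eqref{nlc-B-M}. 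Putting the two computations together produces the asserted form of $\Gamma_{[3]}$.

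As for the main obstacle: at this stage there is essentially none, since all of the genuinely laborious work — reducing the general spatial spray coefficient \eqref{spray-spatial-general} to the locally-Minkowski form \eqref{spray-spatial-Minkowski} and then evaluating it on the Chernov data via \eqref{formule} — has already been carried out in establishing the Proposition. The only thing one must be careful about is the homogeneity bookkeeping, namely the observation that $G_{(1)1}^{(i)}$ is genuinely \emph{linear} in $y_{1}$ with a coefficient that is a function of $t$ only, so that the fibre derivative $\partial/\partial y_{1}^{j}$ collapses to a Kronecker delta and does not generate additional $y$-dependent terms; this is transparent from the closed form \eqref{spray-B-M}. Thus the statement is indeed an immediate corollary of the Proposition and of \eqref{nlc-general}.
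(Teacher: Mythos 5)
Your proposal is correct and coincides with the paper's own (implicit) argument: the corollary is obtained exactly by substituting the spray \eqref{spray-B-M} into the general formulas \eqref{nlc-general}, with $M_{(1)1}^{(i)}=2H_{(1)1}^{(i)}$ and $N_{(1)j}^{(i)}=\partial G_{(1)1}^{(i)}/\partial y_{1}^{j}$, the latter collapsing to $-\frac{\varkappa_{11}^{1}}{2}\delta_{j}^{i}$ because $\varkappa_{11}^{1}$ depends only on $t$. Your remark that the real work was already done in the preceding Proposition is also exactly the paper's point of view.
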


%=================================================================================================================
\section{Cartan canonical connection.\\Distinguished torsions and curvatures}

\hspace{5mm}The importance of the nonlinear connection (\ref{nlc-B-M}) is
coming from the possibility of construction of the dual \textit{local adapted bases}: of $d-$vector fields
    \zx{a-b-v}\left\{ \frac{\delta }{\delta t}=\frac{\partial }{\partial t}+\varkappa_{11}^{1}y_{1}^{p}
    \frac{\partial }{\partial y_{1}^{p}}\;\;;\;\;
    \frac{\delta}{\delta x^{i}}=\frac{\partial }{\partial x^{i}}+\frac{\varkappa _{11}^{1}}{2}
        \frac{\partial }{\partial y_{1}^{i}}\;\;;\;\;
        \dfrac{\partial }{\partial y_{1}^{i}}\right\} \subset \mathcal{X}(E)\zc
and of $d-$covector fields
   \zx{a-b-co}\left\{ dt\;\;;\;\;dx^{i}\;\;;\;\;
    \delta y_{1}^{i}=dy_{1}^{i}-\varkappa_{11}^{1}y_{1}^{i}dt-\frac{\varkappa _{11}^{1}}{2}dx^{i}\right\}
    \subset \mathcal{X}^{\ast }(E),\zc
where $E=J^{1}(\mathbb{R},M^{4})$. Note that, under a change of coordinates (%
\ref{tr-rules}), the elements of the adapted bases (\ref{a-b-v}) and (\ref%
{a-b-co}) transform as classical tensors. Consequently, all subsequent
geometrical objects on the $1$-jet space $J^{1}(\mathbb{R},M^{4})$ (as
Cartan canonical connection, torsion, curvature etc.) will be described in
local adapted components. We emphasize that the definition of local components of connections, torsion and curvature, obey the formalism used in the works \cite{MA,Nec,Ner,AN}.

Using a general result from \cite{Ner}, by direct computations, we
can give the following important geometrical result:

\begin{theorem}
The Cartan canonical $\Gamma _{\lbrack 3]}$-linear connection, produced by
the rheonomic Chernov metric (\ref{rheon-B-M}), has the following adapted
local components:%
\begin{equation*}
C\Gamma _{\lbrack 3]}=\left( \varkappa _{11}^{1},\mbox{ }G_{j1}^{k}=0,\mbox{
}L_{jk}^{i}=\frac{\varkappa _{11}^{1}}{2}C_{j(k)}^{i(1)},\mbox{ }%
C_{j(k)}^{i(1)}\right) ,
\end{equation*}%
where%
\begin{equation}
\begin{array}{ll}
C_{j(k)}^{i(1)}= & 3S_{[3]}^{im1}S_{[3]jkm}-\medskip \\
& -\dfrac{1}{6}\dfrac{1}{S_{[3]111}}\left[ S_{[3]jk1}\dfrac{y_{1}^{i}}{2}%
+\delta _{j}^{i}S_{[3]k11}+\delta _{k}^{i}S_{[3]j11}\right] +\medskip \\
& +\dfrac{1}{9}\dfrac{1}{S_{[3]111}^{2}}S_{[3]j11}S_{[3]k11}y_{1}^{i}\mbox{.}%
\end{array}
\label{C-(ijk)}
\end{equation}
\end{theorem}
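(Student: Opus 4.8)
The plan is to specialize to the Chernov data the general formulae for the Cartan canonical $\Gamma$-linear connection of a rheonomic third-root Lagrangian from \cite{Ner}. Relative to the canonical nonlinear connection $\Gamma_{[3]}$ of \eqref{nlc-B-M}, those formulae give, besides the temporal coefficient $\varkappa_{11}^{1}$, the generalized Christoffel symbols
\begin{gather*}
G_{j1}^{k}=\frac{g^{km}}{2}\cdot\frac{\delta g_{jm}}{\delta t},\\
L_{jk}^{i}=\frac{g^{im}}{2}\left(\frac{\delta g_{jm}}{\delta x^{k}}+\frac{\delta g_{km}}{\delta x^{j}}-\frac{\delta g_{jk}}{\delta x^{m}}\right),\\
C_{j(k)}^{i(1)}=\frac{g^{im}}{2}\left(\frac{\partial g_{jm}}{\partial y_{1}^{k}}+\frac{\partial g_{km}}{\partial y_{1}^{j}}-\frac{\partial g_{jk}}{\partial y_{1}^{m}}\right),
\end{gather*}
where $\delta/\delta t$ and $\delta/\delta x^{i}$ are the adapted vector fields \eqref{a-b-v} attached to $\Gamma_{[3]}$, and $g=(g_{ij})$, $g^{-1}=(g^{jk})$ are the $d$-tensors \eqref{g-jos-(ij)}, \eqref{g-sus-(jk)}.

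First I would record two structural features of the Chernov fundamental tensor that are visible in \eqref{g-jos-(ij)}: it depends neither on $t$ nor on $x$, i.e.\ $g_{[3]ij}=g_{[3]ij}(y)$, and, being (up to a factor) a second $y_{1}$-derivative of the $2$-homogeneous function $F_{[3]}^{2}$, it is $0$-homogeneous in $y_{1}$, so $y_{1}^{p}\,\partial g_{[3]ij}/\partial y_{1}^{p}=0$ by Euler's identity. Consequently $\delta g_{[3]jm}/\delta t=\partial_{t}g_{[3]jm}+\varkappa_{11}^{1}y_{1}^{p}\,\partial g_{[3]jm}/\partial y_{1}^{p}=0$, whence $G_{j1}^{k}=0$; and $\delta g_{[3]jm}/\delta x^{k}=\partial g_{[3]jm}/\partial x^{k}+\tfrac{\varkappa_{11}^{1}}{2}\,\partial g_{[3]jm}/\partial y_{1}^{k}=\tfrac{\varkappa_{11}^{1}}{2}\,\partial g_{[3]jm}/\partial y_{1}^{k}$, which factors the entire Christoffel combination out of $L_{jk}^{i}$, so that $L_{jk}^{i}=\tfrac{\varkappa_{11}^{1}}{2}\,C_{j(k)}^{i(1)}$. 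Since moreover $\partial g_{[3]jk}/\partial y_{1}^{m}$ is a third $y_{1}$-derivative of $F_{[3]}^{2}$, hence totally symmetric in $j,k,m$, the three terms in the bracket defining $C_{j(k)}^{i(1)}$ coincide and $C_{j(k)}^{i(1)}=\tfrac{g^{im}}{2}\,\partial g_{[3]jk}/\partial y_{1}^{m}$. This already produces the announced shape $C\Gamma_{[3]}=\bigl(\varkappa_{11}^{1},\,0,\,\tfrac{\varkappa_{11}^{1}}{2}C_{j(k)}^{i(1)},\,C_{j(k)}^{i(1)}\bigr)$ and reduces the theorem to identifying $C_{j(k)}^{i(1)}$ with \eqref{C-(ijk)}.

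For that last identification I would differentiate the general expression \eqref{g-(ij)-general} for $g_{jk}$ with respect to $y_{1}^{m}$, using the homogeneity relations listed right after \eqref{F} — in particular $\partial S_{111}/\partial y_{1}^{m}=S_{m11}$, $\partial S_{i11}/\partial y_{1}^{j}=S_{ij1}$, $\partial S_{ij1}/\partial y_{1}^{k}=6S_{ijk}$ and $\partial S_{111}^{-1/3}/\partial y_{1}^{m}=-\tfrac13 S_{111}^{-4/3}S_{m11}$ — so as to write $\tfrac12\,\partial g_{jk}/\partial y_{1}^{m}$ in terms of $S_{111},S_{i11},S_{ij1},S_{ijk}$; then raise the index with $g^{im}$ from \eqref{g+(jk)-general} and collapse the contractions by $S^{im1}S_{m11}=S_{1}^{i}$, $S^{pq1}S_{p11}S_{q11}=3\mathbf{S}_{111}$, $S_{i11}y_{1}^{i}=3S_{111}$, $S_{ij1}y_{1}^{j}=2S_{i11}$. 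This yields a general third-root formula for $C_{j(k)}^{i(1)}$ (essentially the $C$-tensor of \cite{AN}). Finally, inserting the Chernov identities \eqref{formule}, namely $S_{[3]1}^{j}=\tfrac12 y_{1}^{j}$ and $\mathbf{S}_{[3]111}=\tfrac12 S_{[3]111}$ — which make $3\bigl(S_{[3]111}-\mathbf{S}_{[3]111}\bigr)=\tfrac32 S_{[3]111}$ and bring $g^{im}$ to the form \eqref{g-sus-(jk)} — one collects: from $\partial S_{[3]jk1}/\partial y_{1}^{m}=6S_{[3]jkm}$ the leading summand $3S_{[3]}^{im1}S_{[3]jkm}$; from the derivative of $S_{[3]111}^{-1/3}$, the factor $1/(3S_{[3]111})$ and the $y_{1}^{i}y_{1}^{m}$-part of $g^{im}$, the correction $-\tfrac{1}{6S_{[3]111}}\bigl[S_{[3]jk1}\tfrac{y_{1}^{i}}{2}+\delta_{j}^{i}S_{[3]k11}+\delta_{k}^{i}S_{[3]j11}\bigr]$; and, as the residual quadratic piece, $\tfrac{1}{9S_{[3]111}^{2}}S_{[3]j11}S_{[3]k11}y_{1}^{i}$ — which is exactly \eqref{C-(ijk)}.

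The bulk of the work — and essentially the only place where an error could creep in — is the bookkeeping in that last step: tracking the powers $S_{[3]111}^{\pm 1/3}$, the numerical coefficients $\tfrac13,\tfrac16,\tfrac19$, and which contractions genuinely collapse under \eqref{formule}. As consistency checks I would verify, directly from \eqref{C-(ijk)}, the relation $C_{j(k)}^{i(1)}y_{1}^{k}=0$ (using $S_{[3]jk1}y_{1}^{k}=2S_{[3]j11}$, $S_{[3]jkm}y_{1}^{k}=\tfrac16 S_{[3]jm1}$, $S_{[3]k11}y_{1}^{k}=3S_{[3]111}$, $S_{[3]}^{im1}S_{[3]jm1}=\delta_{j}^{i}$), which must hold because $g_{[3]ij}$ is $0$-homogeneous and the nonlinear connection \eqref{nlc-B-M} is affine in $y_{1}$, together with the symmetry $C_{j(k)}^{i(1)}=C_{k(j)}^{i(1)}$, which is manifest in the final formula.
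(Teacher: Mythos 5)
Your proposal is correct and follows essentially the same route as the paper: apply the general formulas for $G_{j1}^{k}$, $L_{jk}^{i}$, $C_{j(k)}^{i(1)}$ from \cite{Ner}, reduce $C_{j(k)}^{i(1)}$ to $\frac{g^{im}}{2}\partial g_{[3]jk}/\partial y_{1}^{m}$ by total symmetry of the third $y$-derivative of $F_{[3]}^{2}$, compute that derivative from (\ref{g-(ij)-general}), and contract with (\ref{g-sus-(jk)}) using the Chernov identities (\ref{formule}). You merely make explicit the $t$-, $x$-independence and $0$-homogeneity arguments behind $G_{j1}^{k}=0$ and $L_{jk}^{i}=\frac{\varkappa_{11}^{1}}{2}C_{j(k)}^{i(1)}$, which the paper leaves as ``direct computations,'' and your bookkeeping of the coefficients indeed reproduces (\ref{C-(ijk)}).
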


\begin{proof}
Using the Chernov derivative operators (\ref{a-b-v}) and (\ref{a-b-co}),
together with the relations (\ref{g-jos-(ij)}) and (\ref{g-sus-(jk)}), we
apply the general formulas which give the adapted components of the Cartan
canonical connection, namely \cite{Ner}%
\begin{equation*}
G_{j1}^{k}=\frac{g_{[3]}^{km}}{2}\frac{\delta g_{[3]mj}}{\delta t},\quad
L_{jk}^{i}=\frac{g_{[3]}^{im}}{2}\left( \frac{\delta g_{[3]jm}}{\delta x^{k}}%
+\frac{\delta g_{[3]km}}{\delta x^{j}}-\frac{\delta g_{[3]jk}}{\delta x^{m}}%
\right) ,
\end{equation*}%
\begin{equation*}
C_{j(k)}^{i(1)}=\frac{g_{[3]}^{im}}{2}\left( \frac{\partial g_{[3]jm}}{%
\partial y_{1}^{k}}+\frac{\partial g_{[3]km}}{\partial y_{1}^{j}}-\frac{%
\partial g_{[3]jk}}{\partial y_{1}^{m}}\right) =\frac{g_{[3]}^{im}}{2}\frac{%
\partial g_{[3]jk}}{\partial y_{1}^{m}},
\end{equation*}%
where, by computations, we have%
\begin{eqnarray*}
\frac{\partial g_{[3]jk}}{\partial y_{1}^{m}}
&=&2S_{[3]111}^{-1/3}S_{[3]jkm}-\medskip \\
&&-\dfrac{1}{9}S_{[3]111}^{-4/3}\left\{
S_{[3]jk1}S_{[3]m11}+S_{[3]km1}S_{[3]j11}+S_{[3]mj1}S_{[3]k11}\right\}
+\medskip \\
&&+\frac{4}{27}S_{[3]111}^{-7/3}S_{[3]j11}S_{[3]k11}S_{[3]m11}.
\end{eqnarray*}%
{For details, we refer to \cite{Shimada} and \cite{AN}.}
\end{proof}

\begin{remark}
The {following} properties of the $d-$tensor $C_{j(k)}^{i(1)}$ {hold} true:%
\begin{equation*}
C_{j(k)}^{i(1)}=C_{k(j)}^{i(1)},\quad C_{j(m)}^{i(1)}y_{1}^{m}=0.
\end{equation*}
\end{remark}

\begin{theorem}
The Cartan canonical connection $C\Gamma _{\lbrack 3]}$ of the rheonomic\linebreak
Chernov metric (\ref{rheon-B-M}) has \textbf{three} effective local torsion
d-tensors:
{
   $$\begin{array}{c}P_{(1)i(j)}^{(k)\mbox{ }(1)}=-\dfrac{1}{2}\varkappa_{11}^{1}C_{i(j)}^{k(1)},\quad
        P_{i(j)}^{k(1)}=C_{i(j)}^{k(1)},\medskip \\
    R_{(1)1j}^{(k)}=\dfrac{1}{2}\left( \dfrac{d\varkappa _{11}^{1}}{dt}-\varkappa _{11}^{1}
        \varkappa _{11}^{1}\right) \delta _{j}^{k}.\end{array}$$
}
\end{theorem}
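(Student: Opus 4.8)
The plan is to specialize the general torsion formulas for a Cartan $\Gamma$-linear connection on $J^1(\mathbb{R},M^4)$ (as recorded in \cite{Ner}) to the data of the rheonomic Chernov metric, namely to the nonlinear connection $\Gamma_{[3]}$ from \eqref{nlc-B-M} and the Cartan connection $C\Gamma_{[3]}$ from the preceding Theorem, in which $G^k_{j1}=0$ and $L^i_{jk}=\tfrac{\varkappa^1_{11}}{2}C^{i(1)}_{j(k)}$. First I would write down the list of the (generically) nonzero local $d$-torsions attached to a Cartan connection in this jet setting; there are five torsion $d$-tensor types, built from: (i) the curvature-type object $R^{(k)}_{(1)1j}$ coming from the Lie bracket $[\delta/\delta t,\delta/\delta x^j]$; (ii) the horizontal torsion $T^i_{jk}$ from $[\delta/\delta x^j,\delta/\delta x^k]$ and the antisymmetric part of $L^i_{jk}$; (iii) the mixed torsions $P^{(k)\;(1)}_{(1)i(j)}$ and $P^{k(1)}_{i(j)}$ coming from the brackets of $\delta/\delta x^j$ and $\delta/\delta t$ with $\partial/\partial y^j_1$; and (iv) the vertical torsion $S^{i(1)(1)}_{(j)(k)}$, the antisymmetric part of $C^{i(1)}_{j(k)}$ in its lower indices. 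The task is then to evaluate each of these five expressions on the Chernov data and show exactly three survive.

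The key steps, in order: (1) Since $L^i_{jk}=\tfrac{\varkappa^1_{11}}{2}C^{i(1)}_{j(k)}$ and $C^{i(1)}_{j(k)}=C^{i(1)}_{k(j)}$ (the symmetry recorded in the Remark just above), the horizontal torsion $T^i_{jk}=L^i_{jk}-L^i_{kj}$ vanishes identically; the bracket $[\delta/\delta x^j,\delta/\delta x^k]$ also contributes nothing because the coefficient $\varkappa^1_{11}/2$ in \eqref{a-b-v} depends only on $t$. (2) Likewise $S^{i(1)(1)}_{(j)(k)}=C^{i(1)}_{j(k)}-C^{i(1)}_{k(j)}=0$ by the same symmetry, killing the vertical torsion. (3) For the mixed torsion $P^{(k)\;(1)}_{(1)i(j)}$, the general formula reduces (after $G^k_{j1}=0$) to $-\,\partial M^{(k)}_{(1)1}/\partial y^i_1 \cdot$(something) plus the $\partial/\partial y^i_1$ of the nonlinear-connection coefficient; using $N^{(i)}_{(1)j}=-\tfrac{\varkappa^1_{11}}{2}\delta^i_j$ and $M^{(i)}_{(1)1}=-\varkappa^1_{11}y^i_1$, one obtains $P^{(k)\;(1)}_{(1)i(j)}=-\tfrac12\varkappa^1_{11}C^{k(1)}_{i(j)}$ — here the term $C^{k(1)}_{i(j)}$ enters through the $\delta/\delta t$-derivative of $g_{[3]ij}$ hidden in $G^k_{j1}$, or more directly from the general identity $P^{(k)\;(1)}_{(1)i(j)} = L^k_{ij} - N^{(k)}_{(1)i}\big|_{(j)} $-type relation; in any case it is a short substitution. (4) For $P^{k(1)}_{i(j)}$, the general expression is $L^k_{ij}-\partial N^{(k)}_{(1)i}/\partial y^j_1 - C^{k(1)}_{i(m)}N^{(m)}_{(1)j}/\,\ldots$; since $N^{(k)}_{(1)i}$ is constant in $y$ its $y$-derivative drops out, and one is left with $P^{k(1)}_{i(j)} = C^{k(1)}_{i(j)}$. (5) Finally $R^{(k)}_{(1)1j}$ is computed from $\delta M^{(k)}_{(1)1}/\delta x^j - \delta N^{(k)}_{(1)j}/\delta t$; the first term vanishes because $M^{(k)}_{(1)1}=-\varkappa^1_{11}y^k_1$ has no $x$-dependence and $\delta/\delta x^j$ acting on it gives $-\varkappa^1_{11}\cdot\tfrac{\varkappa^1_{11}}{2}\delta^k_j$ from the vertical part of \eqref{a-b-v}, while $\delta N^{(k)}_{(1)j}/\delta t = \partial(-\tfrac{\varkappa^1_{11}}{2}\delta^k_j)/\partial t = -\tfrac12\tfrac{d\varkappa^1_{11}}{dt}\delta^k_j$; collecting signs yields $R^{(k)}_{(1)1j}=\tfrac12\big(\tfrac{d\varkappa^1_{11}}{dt}-\varkappa^1_{11}\varkappa^1_{11}\big)\delta^k_j$, which is the claimed expression. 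One then notes that $C^{k(1)}_{i(j)}$ is not identically zero (e.g. its trace properties in the Remark do not force it to vanish), so $P^{(k)\;(1)}_{(1)i(j)}$ and $P^{k(1)}_{i(j)}$ are genuinely nonzero, and $R^{(k)}_{(1)1j}$ is nonzero for a generic Riemannian $h_{11}(t)$; hence exactly three effective torsions.

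The main obstacle I anticipate is bookkeeping rather than conceptual: correctly importing the general torsion formulas of \cite{Ner} with all their index placements and signs, and in particular being careful that the $\delta$-operators in \eqref{a-b-v} carry the $t$-dependent coefficient $\varkappa^1_{11}$, so that derivatives such as $\delta/\delta x^j$ and $\delta/\delta t$ applied to the connection coefficients pick up the extra curvature-type terms that produce the $\varkappa^1_{11}\varkappa^1_{11}$ contribution in $R^{(k)}_{(1)1j}$. Once the correct general formulas are in hand, every evaluation is a one- or two-line substitution using $G^k_{j1}=0$, the $y$-independence of $N^{(k)}_{(1)j}$, and the symmetry $C^{i(1)}_{j(k)}=C^{i(1)}_{k(j)}$; I would present the three surviving computations explicitly and merely remark that the other two candidate torsions vanish by the symmetry of $C^{i(1)}_{j(k)}$ and the $x$-independence of the nonlinear connection.
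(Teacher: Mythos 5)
Your proposal follows the same route as the paper's proof: import the general list of torsion $d$-tensors of an $h$-normal $\Gamma$-linear connection on $J^{1}(\mathbb{R},M^{4})$ from \cite{Ner}, observe that all but three vanish for the data $\Gamma_{[3]}$, $C\Gamma_{[3]}$, and evaluate the survivors by direct substitution; your computation of $R_{(1)1j}^{(k)}$ from $\delta M_{(1)1}^{(k)}/\delta x^{j}-\delta N_{(1)j}^{(k)}/\delta t$ and your vanishing arguments via the symmetry $C_{j(k)}^{i(1)}=C_{k(j)}^{i(1)}$ agree with what the paper does (the paper merely asserts the cancellations). Two bookkeeping corrections: the paper counts \emph{eight} generic torsion components, of which \emph{five} cancel (not five and two); and the surviving mixed components are given in \cite{Ner} by $P_{(1)i(j)}^{(k)(1)}=\partial N_{(1)i}^{(k)}/\partial y_{1}^{j}-L_{ji}^{k}$, which yields $-\tfrac{1}{2}\varkappa_{11}^{1}C_{i(j)}^{k(1)}$ purely from the $y$-independence of $N_{(1)i}^{(k)}$, while $P_{i(j)}^{k(1)}=C_{i(j)}^{k(1)}$ holds by definition and needs no computation --- the hybrid formula you sketch for it (starting from $L_{ij}^{k}-\partial N_{(1)i}^{(k)}/\partial y_{1}^{j}-\cdots$) would not reproduce $C_{i(j)}^{k(1)}$, so that step should be replaced by the correct citation rather than derived.
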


\begin{proof}
A general $h$-normal $\Gamma $-linear connection on the 1-jet space $J^{1}(%
\mathbb{R},M^{4})$ is characterized by \textit{eight} effective $d-$tensors of
torsion {(cf. \cite{Ner})}. For our Cartan
canonical connection $C\Gamma _{\lbrack 3]}$ these reduce to the following
\textit{three} (the other five cancel):\par\smallskip
   ${P_{(1)i(j)}^{(k)\mbox{ }(1)}={\dfrac{\partial N_{(1)i}^{(k)}}{\partial
y_{1}^{j}}}-L_{ji}^{k}},\quad R_{(1)1j}^{(k)}={\dfrac{\delta M_{(1)1}^{(k)}}{%
\delta x^{j}}}-{\dfrac{\delta N_{(1)j}^{(k)}}{\delta t}},\quad P_{i(j)}^{k(1)}=C_{i(j)}^{k(1)}.$
\end{proof}

\begin{theorem}
The Cartan canonical connection $C\Gamma _{\lbrack 3]}$ of the rheonomic\linebreak
Chernov metric (\ref{rheon-B-M}) has \textbf{three} effective local
curvature $d-$tensors:%
\begin{equation*}
\begin{array}{c}
R_{ijk}^{l}=\dfrac{1}{4}\varkappa _{11}^{1}\varkappa
_{11}^{1}S_{i(j)(k)}^{l(1)(1)},\quad P_{ij(k)}^{l\mbox{ }(1)}=\dfrac{1}{2}%
\varkappa _{11}^{1}S_{i(j)(k)}^{l(1)(1)},\medskip \\
S_{i(j)(k)}^{l(1)(1)}={{\dfrac{\partial C_{i(j)}^{l(1)}}{\partial y_{1}^{k}}}%
-{\dfrac{\partial C_{i(k)}^{l(1)}}{\partial y_{1}^{j}}}%
+C_{i(j)}^{m(1)}C_{m(k)}^{l(1)}-C_{i(k)}^{m(1)}C_{m(j)}^{l(1)}.}%
\end{array}%
\end{equation*}
\end{theorem}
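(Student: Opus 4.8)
The plan is to specialize to $C\Gamma_{[3]}$ the general formulas of \cite{Ner} giving the local curvature $d$-tensors of an arbitrary $h$-normal $\Gamma$-linear connection on $J^1(\mathbb{R},M^4)$ — the curvature counterpart of the eight torsion $d$-tensors of the previous theorem. Such a connection has five curvature $d$-tensors, attached to the five pairs that can be formed from the adapted directions $\delta/\delta t$, $\delta/\delta x^j$, $\partial/\partial y_1^j$: three of them are the $hh$-, $hv$- and $vv$-curvatures $R_{ijk}^l$, $P_{ij(k)}^{l(1)}$, $S_{i(j)(k)}^{l(1)(1)}$ of the statement, while the other two — those coupling $\delta/\delta t$ with $\delta/\delta x^j$, respectively with $\partial/\partial y_1^j$ — must be shown to be identically zero.

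Before substituting I would record the structural features of $C\Gamma_{[3]}$ that force every cancellation: $G_{j1}^k=0$; $L_{jk}^i=\tfrac12\varkappa_{11}^1C_{j(k)}^{i(1)}$ with $\varkappa_{11}^1=\varkappa_{11}^1(t)$; for the Chernov metric the $S_{[3]pqr}$ are constant, so $g_{[3]ij}$, $g_{[3]}^{jk}$ and hence $C_{j(k)}^{i(1)}$ depend on $y$ alone, whence $\partial C_{j(k)}^{i(1)}/\partial x^m=0$ and $\delta C_{j(k)}^{i(1)}/\delta x^m=\tfrac12\varkappa_{11}^1\,\partial C_{j(k)}^{i(1)}/\partial y_1^m$; $C_{j(k)}^{i(1)}$ is homogeneous of degree $-1$ in $y$ (since $F$ is $1$-homogeneous, $g_{ij}$ is $0$-homogeneous), so $y_1^p\,\partial C_{j(k)}^{i(1)}/\partial y_1^p=-C_{j(k)}^{i(1)}$; and, from the torsion theorem, $R_{(1)1j}^{(k)}=\tfrac12(d\varkappa_{11}^1/dt-\varkappa_{11}^1\varkappa_{11}^1)\delta_j^k$ and $P_{(1)j(k)}^{(m)(1)}=-\tfrac12\varkappa_{11}^1C_{j(k)}^{m(1)}$, together with $R_{(1)jk}^{(l)}=0$ and $[\delta/\delta x^j,\partial/\partial y_1^k]=0$ (both because $\varkappa_{11}^1$ depends only on $t$).

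Plugging these into the $hh$-curvature, the $\delta/\delta x$-derivatives $\delta L_{ik}^l/\delta x^j-\delta L_{ij}^l/\delta x^k$ collapse to $\tfrac14(\varkappa_{11}^1)^2(\partial C_{i(k)}^{l(1)}/\partial y_1^j-\partial C_{i(j)}^{l(1)}/\partial y_1^k)$, the quadratic part $L_{ik}^mL_{mj}^l-L_{ij}^mL_{mk}^l$ becomes $\tfrac14(\varkappa_{11}^1)^2(C_{i(k)}^{m(1)}C_{m(j)}^{l(1)}-C_{i(j)}^{m(1)}C_{m(k)}^{l(1)})$, and the term $C_{i(m)}^{l(1)}R_{(1)jk}^{(m)}$ drops; reassembling gives $R_{ijk}^l=\tfrac14\varkappa_{11}^1\varkappa_{11}^1S_{i(j)(k)}^{l(1)(1)}$. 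The $hv$-curvature is treated identically, the term carrying $P_{(1)j(k)}^{(m)(1)}$ now supplying a $CC$-contribution of the right sign, so everything reassembles into $P_{ij(k)}^{l(1)}=\tfrac12\varkappa_{11}^1S_{i(j)(k)}^{l(1)(1)}$; the $vv$-curvature $S_{i(j)(k)}^{l(1)(1)}$ involves neither $G$ nor $L$ nor the $h$-normality relation and is simply its general expression. It remains to kill the two temporal curvatures: the $G_{j1}^k$-terms disappear at once, leaving $\delta L_{ij}^l/\delta t-C_{i(m)}^{l(1)}R_{(1)1j}^{(m)}$ in the first and $\delta C_{i(j)}^{l(1)}/\delta t+\varkappa_{11}^1C_{i(j)}^{l(1)}$ in the second; using $\delta/\delta t=\partial/\partial t+\varkappa_{11}^1y_1^p\,\partial/\partial y_1^p$ and the $(-1)$-homogeneity of $C$ one finds $\delta C_{i(j)}^{l(1)}/\delta t=-\varkappa_{11}^1C_{i(j)}^{l(1)}$ and $\delta L_{ij}^l/\delta t=\tfrac12(d\varkappa_{11}^1/dt-\varkappa_{11}^1\varkappa_{11}^1)C_{i(j)}^{l(1)}=C_{i(m)}^{l(1)}R_{(1)1j}^{(m)}$, so both temporal curvatures vanish.

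The part that is merely laborious is keeping the index contractions and the sign conventions of \cite{Ner} straight in the full curvature formulas. The one genuinely load-bearing step is the mutual cancellation inside the two temporal curvatures: it works precisely because $L_{jk}^i=\tfrac12\varkappa_{11}^1C_{j(k)}^{i(1)}$, because $C$ is $(-1)$-homogeneous, and because $R_{(1)1j}^{(k)}$ has exactly the value computed earlier — drop any one of these hypotheses and those two curvatures would survive.
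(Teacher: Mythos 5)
Your proposal is correct and follows the same route as the paper: the paper's proof likewise just specializes the five general curvature formulas of an $h$-normal $\Gamma$-linear connection from \cite{Ner} to $C\Gamma_{[3]}$, listing the three surviving expressions and asserting that the other two cancel. You in fact supply more detail than the paper does — in particular the explicit cancellation of the two temporal curvatures via $G_{j1}^k=0$, the $(-1)$-homogeneity of $C_{j(k)}^{i(1)}$ in $y$, and the computed value of $R_{(1)1j}^{(k)}$ — all of which checks out.
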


\begin{proof}
A general $h$-normal $\Gamma $-linear connection on the 1-jet space $J^{1}(%
\mathbb{R},M^{4})$ is characterized by \textit{five} effective $d-$tensors of
curvature {(cf. \cite{Ner})}. For our Cartan
canonical connection $C\Gamma _{\lbrack 3]}$ these reduce to the following
\textit{three} (the other two cancel):%
\begin{equation*}
\begin{array}{l}
\medskip {R_{ijk}^{l}={\dfrac{\delta L_{ij}^{l}}{\delta x^{k}}}-{\dfrac{%
\delta L_{ik}^{l}}{\delta x^{j}}}+L_{ij}^{m}L_{mk}^{l}-L_{ik}^{m}L_{mj}^{l},}
\\
\medskip {P_{ij(k)}^{l\;\;(1)}={\dfrac{\partial L_{ij}^{l}}{\partial
y_{1}^{k}}}-C_{i(k)|j}^{l(1)}+C_{i(m)}^{l(1)}P_{(1)j(k)}^{(m)\;\;(1)},} \\
S_{i(j)(k)}^{l(1)(1)}={{\dfrac{\partial C_{i(j)}^{l(1)}}{\partial y_{1}^{k}}}%
-{\dfrac{\partial C_{i(k)}^{l(1)}}{\partial y_{1}^{j}}}%
+C_{i(j)}^{m(1)}C_{m(k)}^{l(1)}-C_{i(k)}^{m(1)}C_{m(j)}^{l(1)},}%
\end{array}%
\end{equation*}%
where%
\begin{equation*}
{C_{i(k)|j}^{l(1)}=}\frac{\delta {C_{i(k)}^{l(1)}}}{\delta x^{j}}+{%
C_{i(k)}^{m(1)}L_{mj}^{l}}-{C_{m(k)}^{l(1)}L_{ij}^{m}}-{%
C_{i(m)}^{l(1)}L_{kj}^{m}}.
\end{equation*}
\end{proof}
\begin{remark}We have denoted by ${}_{/1}$, ${}_{|i}$ \ and $\vcov{i}$ \ the Cartan covariant 
derivatives with respect to the corresponding $\mathbb{R}-$horizontal (temporal), $M-$horizontal and vertical vector fields of the basis \eqref{a-b-v}.
\end{remark}
%=======================================================
\section{\large Applications of the rheonomic Chernov metric}
\subsection{Geometrical gravitational theory}

\hspace{5mm}From a physical point of view, on the 1-jet space $J^{1}(\mathbb{%
R},M^{4})$, the rheonomic Chernov metric (\ref{rheon-B-M}) produces the
adapted metrical $d-$tensor%
\begin{equation}
\mathbb{G}_{[3]}=h_{11}dt\otimes dt+g_{[3]ij}dx^{i}\otimes
dx^{j}+h^{11}g_{[3]ij}\delta y_{1}^{i}\otimes \delta y_{1}^{j},
\label{gravit-pot-B-M}
\end{equation}%
where $g_{[3]ij}$ is given by (\ref{g-jos-(ij)}). This may be regarded as a
\textit{"non-isotropic gravitational potential"}. In such a physical
context, the nonlinear connection $\Gamma _{\lbrack 3]}$ (used in the
construction of the distinguished 1-forms $\delta y_{1}^{i}$) prescribes,
{most likely, a sort of} \textit{\textquotedblleft interaction\textquotedblright }
between $(t)$-, $(x)$- and $(y)$-fields.

We postulate that the non-isotropic gravitational potential $\mathbb{G}%
_{[3]} $ is governed by the \textit{geometrical Einstein equations}
\begin{equation}
\mbox{Ric }\left( C\Gamma _{\lbrack 3]}\right) -\frac{\mbox{Sc }\left(
C\Gamma _{\lbrack 3]}\right) }{2}\mathbb{G}_{[3]}\mathbb{=}\mathcal{KT},
\label{Einstein-eq-global}
\end{equation}%
where Ric $\left( C\Gamma _{\lbrack 3]}\right) $ is the \textit{Ricci
d-tensor} associated to the Cartan canonical connection $C\Gamma _{\lbrack
3]}$ (in Riemannian sense and using adapted bases), Sc $\left( C\Gamma
_{\lbrack 3]}\right) $ is the \textit{scalar curvature}, $\mathcal{K}$ is
the \textit{Einstein constant} and $\mathcal{T}$ is the intrinsic \textit{%
stress-energy} $d-$tensor of matter.

In this way, working with the adapted basis of vector fields (\ref{a-b-v}),
we can find the local geometrical Einstein equations for the rheonomic
Chernov metric (\ref{rheon-B-M}). Firstly, by direct computations, we find:

\begin{theorem}
The Ricci $d-$tensor of the Cartan canonical connection $C\Gamma _{\lbrack 3]}$
of the rheonomic Chernov metric (\ref{rheon-B-M}) has the following
effective local Ricci $d-$tensor {components}:%
   $$\begin{array}{ll}\medskip R_{ij}:=R_{ijr}^{r} & =\dfrac{1}{4}\varkappa _{11}^{1}\varkappa_{11}^{1}
        \mathbb{S}_{(i)(j)}^{(1)(1)}, \\
    P_{i(j)}^{\mbox{ }(1)}=P_{(i)j}^{(1)}:=P_{ij(r)}^{r\mbox{ }(1)} & =\dfrac{1}{2}
        \varkappa _{11}^{1}\mathbb{S}_{(i)(j)}^{(1)(1)},\medskip \\
    \mathbb{S}_{(i)(j)}^{(1)(1)}&
        =-9S_{[3]}^{pq1}S_{[3]}^{rm1}\left( S_{[3]ijp}S_{[3]qrm}-S_{[3]ipr}S_{[3]jqm}\right) +\medskip \\
    &\quad +\dfrac{1}{12}\dfrac{1}{S_{[3]111}}S_{[3]ij1}-\dfrac{1}{18}\dfrac{1}{S_{[3]111}^{2}}S_{[3]i11}S_{[3]j11},
    \end{array}$$
where $\mathbb{S}_{(i)(j)}^{(1)(1)}=S_{i(j)(r)}^{r(1)(1)}$ is the vertical Ricci $d-$tensor field.
\end{theorem}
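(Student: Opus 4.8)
The plan is to obtain the first two displayed identities essentially for free from the theorem on curvature $d$-tensors already proved, and then to devote the work to a single computation, namely the vertical Ricci contraction $\mathbb{S}_{(i)(j)}^{(1)(1)}=S_{i(j)(r)}^{r(1)(1)}$. First I would recall the general expression from \cite{Ner} for the Ricci $d$-tensor of an $h$-normal $\Gamma$-linear connection as a family of contractions of its five curvature $d$-tensors. For $C\Gamma_{[3]}$, two of those five curvatures vanish — this is exactly what the preceding curvature theorem records, the vanishing being forced by the special shape of $C\Gamma_{[3]}$, notably $G_{j1}^{k}=0$ — so only the contractions $R_{ijr}^{r}$, $P_{ij(r)}^{r\,(1)}$ and $S_{i(j)(r)}^{r(1)(1)}$ are left. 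Since that theorem gives $R_{ijk}^{l}=\tfrac14\varkappa_{11}^{1}\varkappa_{11}^{1}S_{i(j)(k)}^{l(1)(1)}$ and $P_{ij(k)}^{l\,(1)}=\tfrac12\varkappa_{11}^{1}S_{i(j)(k)}^{l(1)(1)}$, setting $l=k=r$ and summing at once produces $R_{ij}=\tfrac14\varkappa_{11}^{1}\varkappa_{11}^{1}\mathbb{S}_{(i)(j)}^{(1)(1)}$ and $P_{i(j)}^{\,(1)}=\tfrac12\varkappa_{11}^{1}\mathbb{S}_{(i)(j)}^{(1)(1)}$, with $\mathbb{S}_{(i)(j)}^{(1)(1)}:=S_{i(j)(r)}^{r(1)(1)}$; the equality $P_{i(j)}^{\,(1)}=P_{(i)j}^{(1)}$ then reflects the symmetry in $i,j$ of the contracted $d$-tensor, which is apparent in the final formula.

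Thus everything reduces to evaluating
$$\mathbb{S}_{(i)(j)}^{(1)(1)}=\frac{\partial C_{i(j)}^{r(1)}}{\partial y_{1}^{r}}-\frac{\partial C_{i(r)}^{r(1)}}{\partial y_{1}^{j}}+C_{i(j)}^{m(1)}C_{m(r)}^{r(1)}-C_{i(r)}^{m(1)}C_{m(j)}^{r(1)},$$
obtained by putting $l=k=r$ in the curvature formula for $S_{i(j)(k)}^{l(1)(1)}$, with $C_{j(k)}^{i(1)}$ as in \eqref{C-(ijk)}. I would first compute the trace $C_{i(r)}^{r(1)}$: using the homogeneity relations $S_{[3]i11}y_{1}^{i}=3S_{[3]111}$ and $S_{[3]ij1}y_{1}^{j}=2S_{[3]i11}$ recalled in the preliminaries, the bracket terms of \eqref{C-(ijk)} collapse and one is left with an expression of the form $3S_{[3]}^{pq1}S_{[3]ipq}$ plus a multiple of $S_{[3]i11}/S_{[3]111}$. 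Next, differentiating \eqref{C-(ijk)} term by term — using $\partial S_{[3]ij1}/\partial y_{1}^{k}=6S_{[3]ijk}$, $\partial S_{[3]i11}/\partial y_{1}^{j}=S_{[3]ij1}$, $\partial S_{[3]111}/\partial y_{1}^{i}=S_{[3]i11}$ and $\partial S_{[3]}^{im1}/\partial y_{1}^{k}=-6S_{[3]}^{ip1}S_{[3]}^{mq1}S_{[3]pqk}$ — I would assemble the two derivative terms $\partial_{r}C_{i(j)}^{r(1)}-\partial_{j}C_{i(r)}^{r(1)}$, simplifying again by homogeneity and by the Chernov-specific reductions $S_{[3]1}^{j}=S_{[3]}^{jp1}S_{[3]p11}=\tfrac12 y_{1}^{j}$ and $S_{[3]}^{pq1}S_{[3]p11}S_{[3]q11}=3\mathbf{S}_{[3]111}=\tfrac32 S_{[3]111}$ from \eqref{formule}.

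The decisive step is the quadratic part $C_{i(j)}^{m(1)}C_{m(r)}^{r(1)}-C_{i(r)}^{m(1)}C_{m(j)}^{r(1)}$: here the identity $C_{j(m)}^{i(1)}y_{1}^{m}=0$ kills every product in which a contracted $y_{1}^{\bullet}$ lands in a lower vertical slot, after which the remaining products reorganize into bilinears $S_{[3]}^{pq1}S_{[3]}^{rm1}(\cdots)$ in the $S_{[3]\bullet\bullet\bullet}$. Collecting all survivors from the three steps should leave precisely
$$\mathbb{S}_{(i)(j)}^{(1)(1)}=-9S_{[3]}^{pq1}S_{[3]}^{rm1}\left(S_{[3]ijp}S_{[3]qrm}-S_{[3]ipr}S_{[3]jqm}\right)+\frac{1}{12}\frac{S_{[3]ij1}}{S_{[3]111}}-\frac{1}{18}\frac{S_{[3]i11}S_{[3]j11}}{S_{[3]111}^{2}}.$$
The main obstacle is the bookkeeping in this last step, together with the accompanying cancellations: one must check that the superficially non-symmetric terms (those carrying $\delta_{j}^{i}$, $y_{1}^{i}$, or a bare factor $S_{[3]k11}$) indeed cancel under the $r$-contraction, so that only the symmetric curvature-type combination $S_{[3]ijp}S_{[3]qrm}-S_{[3]ipr}S_{[3]jqm}$ and the two $S_{[3]111}$-denominator terms remain. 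I would organize the computation by the power of $S_{[3]111}^{-1}$ that each term carries — verifying that the $S_{[3]111}^{0}$ terms add up to $-9S_{[3]}^{pq1}S_{[3]}^{rm1}(\cdots)$, that the $S_{[3]111}^{-1}$ and $S_{[3]111}^{-3}$ terms cancel, and that the $S_{[3]111}^{-2}$ coefficients come out to $\tfrac1{12}$ and $-\tfrac1{18}$ — which also doubles as a consistency check against the general $m$-root computations of \cite{Shimada,AN}.
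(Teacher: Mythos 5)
Your proposal follows essentially the same route as the paper: reduce the first two identities to contractions of the curvature $d$-tensors from the preceding theorem, then evaluate $\mathbb{S}_{(i)(j)}^{(1)(1)}=S_{i(j)(r)}^{r(1)(1)}$ by direct computation of the four terms $\partial_{r}C_{i(j)}^{r(1)}$, $\partial_{j}C_{i(r)}^{r(1)}$, $C_{i(j)}^{m(1)}C_{m(r)}^{r(1)}$, $C_{i(r)}^{m(1)}C_{m(j)}^{r(1)}$ using \eqref{C-(ijk)}, the homogeneity relations, \eqref{formule}, and the identity $\partial S_{[3]}^{rm1}/\partial y_{1}^{k}=-6S_{[3]}^{rp1}S_{[3]}^{mq1}S_{[3]pqk}$ (which the paper records in its contracted form). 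The only differences are organizational (computing the trace $C_{i(r)}^{r(1)}$ first, grouping by powers of $S_{[3]111}^{-1}$), so the argument is correct and matches the paper's proof.
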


\begin{proof}
Using the equality \eqref{C-(ijk)}, by laborious direct computations, we
obtain the following equalities ({we assume implicit summation by $r$ and $m$}):%
\begin{equation*}
{{\dfrac{\partial C_{i(j)}^{r(1)}}{\partial y_{1}^{r}}=3}}\frac{\partial
S_{[3]}^{rm1}}{\partial y_{1}^{r}}S_{[3]ijm}-\dfrac{1}{2}\dfrac{1}{S_{[3]111}%
}S_{[3]ij1}+\dfrac{5}{9}\dfrac{1}{S_{[3]111}^{2}}S_{[3]i11}S_{[3]j11},
\end{equation*}%
\begin{equation*}
{{\dfrac{\partial C_{i(r)}^{r(1)}}{\partial y_{1}^{j}}=3}}\frac{\partial
S_{[3]}^{rm1}}{\partial y_{1}^{j}}S_{[3]irm}-\dfrac{2}{3}\dfrac{1}{S_{[3]111}%
}S_{[3]ij1}+\dfrac{2}{3}\dfrac{1}{S_{[3]111}^{2}}S_{[3]i11}S_{[3]j11},
\end{equation*}%
\begin{eqnarray*}
{C_{i(j)}^{m(1)}C_{m(r)}^{r(1)}} &{=}%
&9S_{[3]}^{mp1}S_{[3]}^{rq1}S_{[3]ijp}S_{[3]mrq}- \\
&&-\dfrac{1}{2}\dfrac{1}{S_{[3]111}}S_{[3]}^{rq1}\left\{
S_{[3]irq}S_{[3]j11}+S_{[3]jrq}S_{[3]i11}\right\} - \\
&&-\dfrac{1}{6}\dfrac{1}{S_{[3]111}}S_{[3]ij1}+\dfrac{2}{9}\dfrac{1}{%
S_{[3]111}^{2}}S_{[3]i11}S_{[3]j11},
\end{eqnarray*}%
\begin{eqnarray*}
{C_{i(r)}^{m(1)}C_{m(j)}^{r(1)}} &{=}%
&9S_{[3]}^{mp1}S_{[3]}^{rq1}S_{[3]irp}S_{[3]mjq}- \\
&&-\dfrac{1}{2}\dfrac{1}{S_{[3]111}}S_{[3]}^{rq1}\left\{
S_{[3]irq}S_{[3]j11}+S_{[3]jrq}S_{[3]i11}\right\} - \\
&&-\dfrac{1}{12}\dfrac{1}{S_{[3]111}}S_{[3]ij1}+\dfrac{1}{6}\dfrac{1}{%
S_{[3]111}^{2}}S_{[3]i11}S_{[3]j11}.
\end{eqnarray*}%
Finally, taking into account that we have%
\begin{equation*}
\mathbb{S}_{(i)(j)}^{(1)(1)}=S_{i(j)(r)}^{r(1)(1)}={{\dfrac{\partial
C_{i(j)}^{r(1)}}{\partial y_{1}^{r}}}-{\dfrac{\partial C_{i(r)}^{r(1)}}{%
\partial y_{1}^{j}}}%
+C_{i(j)}^{m(1)}C_{m(r)}^{r(1)}-C_{i(r)}^{m(1)}C_{m(j)}^{r(1)},}
\end{equation*}%
and using the equalities%
\begin{equation*}
\begin{array}{l}
\dfrac{\partial S_{[3]}^{rm1}}{\partial y_{1}^{r}}%
S_{[3]ijm}=-6S_{[3]}^{mp1}S_{[3]}^{rq1}S_{[3]ijp}S_{[3]mrq},\medskip \\
\dfrac{\partial S_{[3]}^{rm1}}{\partial y_{1}^{j}}%
S_{[3]irm}=-6S_{[3]}^{mp1}S_{[3]}^{rq1}S_{[3]irp}S_{[3]jmq},%
\end{array}%
\end{equation*}%
we obtain the required result.
\end{proof}

\begin{remark}
The vertical Ricci $d-$tensor $\mathbb{S}_{(i)(j)}^{(1)(1)}$ has the following
property of symmetry: $\mathbb{S}_{(i)(j)}^{(1)(1)}=\mathbb{S}_{(j)(i)}^{(1)(1)}.$
\end{remark}

\begin{proposition}
The scalar curvature of the Cartan canonical connection $C\Gamma _{\lbrack 3]}$ of the rheonomic Chernov metric (\ref{rheon-B-M}) is given by
   $$\mbox{Sc }\left( C\Gamma _{\lbrack 3]}\right) =\frac{4h_{11}+\varkappa_{11}^{1}\varkappa _{11}^{1}}{4}\cdot
        \mathbb{S}^{11},\quad\mbox{ where }\quad\mathbb{S}^{11}=g_{[3]}^{pq}\mathbb{S}_{(p)(q)}^{(1)(1)}.$$
\end{proposition}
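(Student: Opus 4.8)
The plan is to compute $\mbox{Sc}\left(C\Gamma_{[3]}\right)$ straight from its definition in the jet geometrical framework of \cite{Ner}: it is the trace, taken with respect to the adapted metrical $d$-tensor $\mathbb{G}_{[3]}$ of \eqref{gravit-pot-B-M}, of the Ricci $d$-tensor of the Cartan canonical connection $C\Gamma_{[3]}$. Since $\mathbb{G}_{[3]}$ is block-diagonal, with $\mathbb{R}$-horizontal block $h_{11}$, $M$-horizontal block $g_{[3]ij}$ and vertical block $h^{11}g_{[3]ij}$, only the ``diagonal'' Ricci components can contribute: the mixed component $P_{i(j)}^{\;(1)}$ drops out because $\mathbb{G}_{[3]}$ has no $h$--$v$ coupling term, and there is no temporal Ricci contribution because, as established in the curvature theorem above, the Cartan connection of the Chernov metric carries only the three effective curvature $d$-tensors $R_{ijk}^{l}$, $P_{ij(k)}^{l\;(1)}$ and $S_{i(j)(k)}^{l(1)(1)}$, none of which has a temporal index (and $R_{(1)1j}^{(k)}$ occurs as a torsion, not a curvature, $d$-tensor). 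Inverting the blocks of $\mathbb{G}_{[3]}$ — the inverse of the vertical block $h^{11}g_{[3]ij}$ being $h_{11}g_{[3]}^{ij}$ — the scalar curvature reduces to
$$\mbox{Sc}\left(C\Gamma_{[3]}\right)=g_{[3]}^{ij}R_{ij}+h_{11}\,g_{[3]}^{ij}\,\mathbb{S}_{(i)(j)}^{(1)(1)}.$$

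Next I would insert into this the value of the spatial Ricci $d$-tensor obtained in the Ricci theorem above, $R_{ij}=\tfrac14\varkappa_{11}^{1}\varkappa_{11}^{1}\,\mathbb{S}_{(i)(j)}^{(1)(1)}$. This yields
$$\mbox{Sc}\left(C\Gamma_{[3]}\right)=\tfrac14\varkappa_{11}^{1}\varkappa_{11}^{1}\,g_{[3]}^{ij}\mathbb{S}_{(i)(j)}^{(1)(1)}+h_{11}\,g_{[3]}^{ij}\mathbb{S}_{(i)(j)}^{(1)(1)}=\frac{\varkappa_{11}^{1}\varkappa_{11}^{1}+4h_{11}}{4}\;g_{[3]}^{pq}\mathbb{S}_{(p)(q)}^{(1)(1)},$$
and recalling the abbreviation $\mathbb{S}^{11}=g_{[3]}^{pq}\mathbb{S}_{(p)(q)}^{(1)(1)}$ one gets exactly the claimed identity $\mbox{Sc}\left(C\Gamma_{[3]}\right)=\dfrac{4h_{11}+\varkappa_{11}^{1}\varkappa_{11}^{1}}{4}\cdot\mathbb{S}^{11}$.

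The computation itself is a one-line algebraic simplification, so the only step requiring real care — and the one I would double-check — is the bookkeeping in the first paragraph: transcribing correctly from \cite{Ner} the general expression of the scalar curvature of an $h$-normal $\Gamma$-linear connection on $J^{1}(\mathbb{R},M^{4})$, pairing the right inverse-metric block with each Ricci component (in particular the factor $h_{11}$, not $h^{11}$, in front of the vertical term, which comes from inverting $h^{11}g_{[3]ij}$), and confirming that no additional Ricci components enter the trace. Everything afterwards uses only the earlier Ricci theorem and the non-degeneracy of $g_{[3]}=(g_{[3]ij})$ secured in Section~2.
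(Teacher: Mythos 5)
Your proposal is correct and follows essentially the same route as the paper: both start from the general scalar curvature formula $\mbox{Sc}\left(C\Gamma_{[3]}\right)=g_{[3]}^{pq}R_{pq}+h_{11}g_{[3]}^{pq}\mathbb{S}_{(p)(q)}^{(1)(1)}$ quoted from \cite{Ner}, then substitute $R_{pq}=\tfrac14\varkappa_{11}^{1}\varkappa_{11}^{1}\mathbb{S}_{(p)(q)}^{(1)(1)}$ and collect terms. The only difference is that you additionally justify the block structure of the trace and the $h_{11}$ factor, which the paper leaves to the citation.
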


\begin{proof}
The general formula for the scalar curvature of a Cartan connection is ({cf. \cite{Ner}})%
\begin{equation*}
\mbox{Sc }\left( C\Gamma _{\lbrack 3]}\right)
=g_{[3]}^{pq}R_{pq}+h_{11}g_{[3]}^{pq}\mathbb{S}_{(p)(q)}^{(1)(1)}.
\end{equation*}
\end{proof}

Describing the global geometrical Einstein equations (\ref%
{Einstein-eq-global}) in the adapted basis of vector fields (\ref{a-b-v}),
{it is known} the following important geometrical and physical result ({cf. \cite{Ner}}):

\begin{theorem}
The local \textbf{geometrical Einstein equations} that govern the
non-isotropic gravitational potential (\ref{gravit-pot-B-M}) (produced by
the rheonomic Chernov metric (\ref{rheon-B-M})) are given by:
\begin{equation}\left\{\begin{array}{l}
\medskip \xi _{11}\mathbb{S}^{11}h_{11}=\mathcal{T}_{11} \\
\medskip \dfrac{\varkappa _{11}^{1}\varkappa _{11}^{1}}{4\mathcal{K}}\mathbb{%
S}_{(i)(j)}^{(1)(1)}+\xi _{11}\mathbb{S}^{11}g_{ij}=\mathcal{T}_{ij} \\
\dfrac{1}{\mathcal{K}}\mathbb{S}_{(i)(j)}^{(1)(1)}+\xi _{11}\mathbb{S}%
^{11}h^{11}g_{ij}=\mathcal{T}_{(i)(j)}^{(1)(1)}%
\end{array}%
\right.  \label{E-1}
\end{equation}%
\medskip
\begin{equation}
\left\{
\begin{array}{lll}
0=\mathcal{T}_{1i}, & 0=\mathcal{T}_{i1}, & 0=\mathcal{T}_{(i)1}^{(1)},%
\medskip \\
0=\mathcal{T}_{1(i)}^{\mbox{ }(1)}, & \dfrac{\varkappa _{11}^{1}}{2\mathcal{K%
}}\mathbb{S}_{(i)(j)}^{(1)(1)}=\mathcal{T}_{i(j)}^{\mbox{ }(1)}, & \dfrac{%
\varkappa _{11}^{1}}{2\mathcal{K}}\mathbb{S}_{(i)(j)}^{(1)(1)}=\mathcal{T}%
_{(i)j}^{(1)},%
\end{array}%
\right.  \label{E-2}
\end{equation}%
\medskip where
\begin{equation*}
\xi _{11}=-\frac{4h_{11}+\varkappa _{11}^{1}\varkappa _{11}^{1}}{8\mathcal{K}%
}.
\end{equation*}
\end{theorem}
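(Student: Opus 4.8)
The plan is to expand the tensorial Einstein equation \eqref{Einstein-eq-global} in the local adapted cobasis $\{dt,\;dx^{i},\;\delta y_{1}^{i}\}$ from \eqref{a-b-co} and to read off the resulting scalar identities block by block. The key structural observation is that the adapted metrical $d-$tensor $\mathbb{G}_{[3]}$ in \eqref{gravit-pot-B-M} is block-diagonal with respect to the $\mathbb{R}-$horizontal, $M-$horizontal and vertical distributions: its only non-vanishing adapted components are $g_{11}=h_{11}$, $g_{ij}=g_{[3]ij}$ and $g_{(i)(j)}^{(1)(1)}=h^{11}g_{[3]ij}$, while every mixed component ($g_{1i}$, $g_{1(i)}^{(1)}$, $g_{i(j)}^{(1)}$, and their transposes) vanishes. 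Hence the term $-\tfrac{1}{2}\,\mathrm{Sc}(C\Gamma_{[3]})\,\mathbb{G}_{[3]}$ contributes only to the three diagonal blocks, and each block of \eqref{Einstein-eq-global} reduces to an ordinary equation between the corresponding block of $\mathrm{Ric}(C\Gamma_{[3]})$ and of $\mathcal{KT}$.

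Next I would substitute the explicit curvature data already established. By the Ricci $d-$tensor Theorem, the only effective adapted components of $\mathrm{Ric}(C\Gamma_{[3]})$ are the $M-$horizontal block $R_{ij}=\tfrac14\varkappa_{11}^{1}\varkappa_{11}^{1}\mathbb{S}_{(i)(j)}^{(1)(1)}$, the mixed $M-$horizontal/vertical block $P_{i(j)}^{(1)}=P_{(i)j}^{(1)}=\tfrac12\varkappa_{11}^{1}\mathbb{S}_{(i)(j)}^{(1)(1)}$, and the vertical block $\mathbb{S}_{(i)(j)}^{(1)(1)}$ itself; all Ricci components carrying the temporal index $1$ vanish, because the two curvature $d-$tensors that would feed them are precisely the ones shown to cancel in the curvature Theorem. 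For the scalar curvature I would use the Proposition, $\mathrm{Sc}(C\Gamma_{[3]})=\tfrac{4h_{11}+\varkappa_{11}^{1}\varkappa_{11}^{1}}{4}\,\mathbb{S}^{11}$ with $\mathbb{S}^{11}=g_{[3]}^{pq}\mathbb{S}_{(p)(q)}^{(1)(1)}$, and absorb the recurring coefficient $-\tfrac12\cdot\tfrac{4h_{11}+\varkappa_{11}^{1}\varkappa_{11}^{1}}{4}/\mathcal{K}$ into the scalar $\xi_{11}$.

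With these substitutions the three diagonal blocks yield, after division by $\mathcal{K}$, exactly the equations of \eqref{E-1}: the $(dt,dt)$-block gives $-\tfrac12\mathrm{Sc}\,h_{11}=\mathcal{KT}_{11}$, i.e. $\xi_{11}\mathbb{S}^{11}h_{11}=\mathcal{T}_{11}$ (using $R_{11}=0$); the $(dx^{i},dx^{j})$-block gives $R_{ij}-\tfrac12\mathrm{Sc}\,g_{ij}=\mathcal{KT}_{ij}$, i.e. the second line; and the $(\delta y_{1}^{i},\delta y_{1}^{j})$-block gives $\mathbb{S}_{(i)(j)}^{(1)(1)}-\tfrac12\mathrm{Sc}\,h^{11}g_{ij}=\mathcal{KT}_{(i)(j)}^{(1)(1)}$, i.e. the third line. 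The mixed blocks give \eqref{E-2}: every block pairing the temporal direction with an $x$- or $y$-direction has both its Ricci part and its $\mathbb{G}_{[3]}$-part equal to zero, forcing $\mathcal{T}_{1i}=\mathcal{T}_{i1}=\mathcal{T}_{(i)1}^{(1)}=\mathcal{T}_{1(i)}^{(1)}=0$, while the $(dx^{i},\delta y_{1}^{j})$- and $(\delta y_{1}^{i},dx^{j})$-blocks have vanishing metric part but nonzero Ricci part $P_{i(j)}^{(1)}$, producing $\tfrac{\varkappa_{11}^{1}}{2\mathcal{K}}\mathbb{S}_{(i)(j)}^{(1)(1)}=\mathcal{T}_{i(j)}^{(1)}$ and its transpose.

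The computation is thus almost entirely assembly: the analytically heavy ingredients (the explicit form of $\mathbb{S}_{(i)(j)}^{(1)(1)}$, the relations $R_{ij}=\tfrac14\varkappa_{11}^{1}\varkappa_{11}^{1}\mathbb{S}_{(i)(j)}^{(1)(1)}$ and $P_{i(j)}^{(1)}=\tfrac12\varkappa_{11}^{1}\mathbb{S}_{(i)(j)}^{(1)(1)}$, and the scalar curvature formula) are already in place. The only points requiring care — and the main obstacle — are the bookkeeping over the block structure: (i) correctly enumerating the nine adapted-component blocks of a symmetric $(0,2)$ $d-$tensor on $J^{1}(\mathbb{R},M^{4})$ and verifying that the off-diagonal blocks of $\mathbb{G}_{[3]}$ genuinely vanish; (ii) confirming that all temporal Ricci components vanish, which follows by tracking which contractions would feed $R_{11}$, $R_{1i}$, $R_{i1}$ and the temporal-vertical blocks against the curvature Theorem; and (iii) respecting the symmetry $P_{i(j)}^{(1)}=P_{(i)j}^{(1)}$ so that the two off-diagonal $x$-$y$ equations in \eqref{E-2} coincide.
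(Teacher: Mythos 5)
Your proposal is correct and follows essentially the same route as the paper, which in fact offers no written proof of this theorem: it simply invokes the general adapted-components form of the geometrical Einstein equations from \cite{Ner} and substitutes the previously computed Ricci $d-$tensor components and scalar curvature. Your block-by-block expansion of \eqref{Einstein-eq-global} in the cobasis \eqref{a-b-co}, using the block-diagonality of $\mathbb{G}_{[3]}$, the vanishing of all temporal Ricci components, and the identification $-\tfrac{1}{2}\,\mathrm{Sc}(C\Gamma_{[3]})=\mathcal{K}\xi_{11}\mathbb{S}^{11}$, makes explicit exactly the computation the paper delegates to the cited reference.
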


\begin{remark}
The local geometrical Einstein equations (\ref{E-1}) and (\ref{E-2}) impose
as the stress-energy $d-$tensor of matter $\mathcal{T}$ to be symmetrical. In
other words, the stress-energy $d-$tensor of matter $\mathcal{T}$ must {satisfy}
the local symmetry conditions%
\begin{equation*}
\mathcal{T}_{AB}=\mathcal{T}_{BA},\quad \forall \mbox{ }A,B\in \left\{ 1,%
\mbox{ }i,\mbox{ }_{(i)}^{(1)}\right\} .
\end{equation*}
\end{remark}

\subsection{Geometrical electromagnetic theory}

\hspace{5mm}In the paper \cite{Ner}, using only a given Lagrangian
function $L$ on the 1-jet space $J^{1}(\mathbb{R},M^{4})$, a geometrical
theory for electromagnetism was also {constructed}. In {this background of jet
relativistic rheonomic Lagrange geometry, we work}
with an \textit{electromagnetic distinguished }$2$\textit{-form}%
   $$\mathbb{F}=F_{(i)j}^{(1)}\delta y_{1}^{i}\wedge dx^{j},$$
where
   $$F_{(i)j}^{(1)}=\frac{h^{11}}{2}\left[g_{jm}N_{(1)i}^{(m)}-g_{im}N_{(1)j}^{(m)}+
    \left(g_{ir}L_{jm}^{r}-g_{jr}L_{im}^{r}\right) y_{1}^{m}\right] ,$$
which is characterized by the following \textit{geometrical Maxwell equations} \cite{Ner}
   $$\aru{\begin{array}{ll}F_{(i)j/1}^{(1)} &=\dfrac{1}{2}\mathcal{A}_{\left\{ i,j\right\} }
        \left\{\overline{D}_{(i)1|j}^{(1)}-D_{(i)m}^{(1)}G_{j1}^{m}+d_{(i)(m)}^{(1)(1)}R_{(1)1j}^{(m)}-\right. \mm
    &\quad\left. -\left[ C_{j(m)}^{p(1)}R_{(1)1i}^{(m)}-G_{i1|j}^{p}\right]h^{11}g_{pq}y_{1}^{q}\right\} ,\mm
        \dsum_{\{i,j,k\}}F_{(i)j\;|\;k}^{(1)}&=-\dfrac{1}{8}\sum_{\{i,j,k\}}\dfrac{\partial^{3}L}{\partial
        y_{1}^{i}\partial y_{1}^{p}\partial y_{1}^{m}}\left[ {\dfrac{\delta N_{(1)j}^{(m)}}{\delta x^{k}}}-
        {\dfrac{\delta N_{(1)k}^{(m)}}{\delta x^{j}}}\right] y_{1}^{p},\mm
    \dsum_{\{i,j,k\}}F_{(i)j}^{(1)}\!\!\!\!\!\!\!\svcov{k}&=0,\end{array}}$$
where $\mathcal{A}_{\left\{ i,j\right\} }$ {denotes} an alternate sum, $\sum_{\{i,j,k\}}$ means a cyclic sum and we have
   $$\begin{array}{l}\aru{
    \overline{D}_{(i)1}^{(1)}=\dfrac{h^{11}}{2}\dfrac{\delta g_{ip}}{\delta t}y_{1}^{p},\quad
        D_{(i)j}^{(1)}=h^{11}g_{ip}\left[-N_{(1)j}^{(p)}+L_{jm}^{p}y_{1}^{m}\right] ,\mm
    d_{(i)(j)}^{(1)(1)}=h^{11}\left[ g_{ij}+g_{ip}C_{m(j)}^{p(1)}y_{1}^{m}\right],\quad
        \overline{D}_{(i)1|j}^{(1)}=\dfrac{\delta \overline{D}_{(i)1}^{(1)}}{\delta
    x^{j}}-\overline{D}_{(m)1}^{(1)}L_{ij}^{m},}\mm
   \qquad G_{i1|j}^{k}=\dfrac{\delta G_{i1}^{k}}{\delta x^{j}}+G_{i1}^{m}L_{mj}^{k}-G_{m1}^{k}L_{ij}^{m},\mm
   \aru{
    F_{(i)j/1}^{(1)}=\dfrac{\delta F_{(i)j}^{(1)}}{\delta t}+F_{(i)j}^{(1)}
        \varkappa _{11}^{1}-F_{(m)j}^{(1)}G_{i1}^{m}-F_{(i)m}^{(1)}G_{j1}^{m},\mm
    F_{(i)j\;|\;k}^{(1)}=\dfrac{\delta F_{(i)j}^{(1)}}{\delta x^{k}}-
        F_{(m)j}^{(1)}L_{ik}^{m}-F_{(i)m}^{(1)}L_{jk}^{m},\mm
    F_{(i)j}^{(1)}\!\!\vcov{k}=\dfrac{\partial F_{(i)j}^{(1)}}{\partial y_{1}^{k}}-
        F_{(m)j}^{(1)}C_{i(k)}^{m(1)}-F_{(i)m}^{(1)}C_{j(k)}^{m(1)}.}\end{array}$$
For {the rheonomic Chernov} metric (\ref{rheon-B-M}) we have $L=F_{[3]}^{2}$
and, consequently, we obtain the electromagnetic $2$-form
\begin{equation*}
\mathbb{F}:=\mathbb{F}_{[3]}=0.
\end{equation*}

In conclusion, {the locally-Minkowski rheonomic Chernov geometrical} electromagnetic theory is trivial. In
our opinion, this fact suggests that the metric \eqref{rheon-B-M} has rather gravitational connotations than electromagnetic ones in its $h-$flat ($x-$independent) version, which leads to the need of considering $x-$dependent conformal deformations of the structure (as, e.g., recently proposed by Garas'ko in \cite{gaa}).

\section{Conclusion}

\hspace{5mm}In recent physical and geometrical studies (\cite{As1,GP,Pa1,Pa2}), an important role is played by the
Finslerian metric
   \zx{F-2}F_{[2]}(t,y)=\sqrt{h^{11}(t)}\cdot \sqrt{y_{1}^{1}y_{1}^{2}+y_{1}^{1}y_{1}^{3}+
        y_{1}^{1}y_{1}^{4}+y_{1}^{2}y_{1}^{3}+y_{1}^{2}y_{1}^{4}+y_{1}^{3}y_{1}^{4}}\zc
which produces the fundamental metrical $d-$tensor
   $$g_{ij}:=g_{[2]ij}=\frac{h_{11}(t)}{2}\frac{\partial^{2}F_{[2]}^{2}}{\partial y_{1}^{i}\partial
    y_{1}^{j}}=\frac{1}{2}\left(1-\delta _{ij}\right) \Rightarrow g^{jk}:=g_{[2]}^{jk}=\frac{2}{3}
    \left( 1-3\delta ^{jk}\right) .$$
The Finslerian metric (\ref{F-2}) generates the jet canonical nonlinear connection
   $$\Gamma _{\lbrack 2]}=\left( M_{(1)1}^{(i)}=-\varkappa _{11}^{1}y_{1}^{i},
    \mbox{ }N_{(1)j}^{(i)}=-\frac{\varkappa _{11}^{1}}{2}\delta _{j}^{i}\right)$$
and the Cartan $\Gamma _{\lbrack 2]}$-linear connection
   $$C\Gamma _{\lbrack 2]}=\left( \varkappa _{11}^{1},\mbox{ }G_{j1}^{k}=0,\mbox{ }
    L_{jk}^{i}=0,\mbox{ }C_{j(k)}^{i(1)}=0\right) .$$
For the Cartan connection $C\Gamma _{\lbrack 2]}$ all torsion $d-$tensors vanish, except
   $$R_{(1)1j}^{(k)}=\dfrac{1}{2}\left[ \dfrac{d\varkappa _{11}^{1}}{dt}
    -\varkappa _{11}^{1}\varkappa _{11}^{1}\right] \delta _{j}^{k},$$
and all curvature $d-$tensors are zero. Consequently, all Ricci $d-$tensors vanish and the scalar curvature cancels.
The geometrical Einstein equations \eqref{Einstein-eq-global} produced by the Finslerian metric \eqref{F-2} become trivial, namely
   $$0=\mathcal{T}_{AB},\quad \forall \mbox{ }A,B\in \left\{ 1,\mbox{ }i,\mbox{ }_{(i)}^{(1)}\right\} .$$
At the same time, the electromagnetic $2$-form associated to the Finslerian metric (\ref{F-2}) has the trivial form
   $$\mathbb{F}:=\mathbb{F}_{[2]}=0.$$
In conclusion, both the metric-tensor based geometrical (gravitational and electromagnetic) theories are shown to be trivial for the case of the rheonomic locally Finslerian Chernov metric (\ref{F-2}). Hence, for developing a non-trivial $h-$model, one may need to consider other closely related alternatives offered by $h-$conformally-deformed models or by $x-$dependent rheonomic Finsler metrics of $m-$root type.

\noindent \textit{Author's addresses: }
{\small

\medskip \noindent \textbf{Vladimir Balan}\newline
University Politehnica of Bucharest, Faculty of Applied Sciences,\newline
Department of Mathematics-Informatics I,\newline
Splaiul Independen\c{t}ei 313, RO-060042 Bucharest, Romania.\newline
\textit{E-mail:} vladimir.balan@upb.ro\newline
\textit{Website:} http://www.mathem.pub.ro/dept/vbalan.htm

\medskip \noindent \textbf{Mircea Neagu}\newline
University Transilvania of Bra\c{s}ov, Faculty of Mathematics and
Informatics,\newline
Department of Algebra, Geometry and Differential Equations,\newline
B-dul Iuliu Maniu, Nr. 50, BV 500091, Bra\c{s}ov, Romania.\newline
\textit{E-mails:} mircea.neagu@unitbv.ro, mirceaneagu73@yahoo.com\newline
\textit{Website:} http://www.2collab.com/user:mirceaneagu
}
\end{document}